 \newtheorem{theorem}{Theorem}[section]
 \newtheorem{definition}[theorem]{Definition}
 \newtheorem{lemma}[theorem]{Lemma}
 \newtheorem{corollary}[theorem]{Corollary}
 \newtheorem{remark}{Remark}
\numberwithin{equation}{section}
\def\bR{{\mathbb R}}
\def\dom{\mathrm{dom}}
\begin{document}
 \title[Energy and Laplacian of FIFs]{Energy and Laplacian of Fractal Interpolation Functions}
\author{Xiao-Hui Li}
\address{Department of Mathematics, Zhejiang University, Hangzhou 310027,
China} \email{lxhmath@zju.edu.cn}

\author{Huo-Jun Ruan}
\address{Department of Mathematics, Zhejiang University, Hangzhou 310027,
China} \email{ruanhj@zju.edu.cn}
\thanks{The research of Ruan is supported in part by
the NSFC grant 11271327, and by ZJNSFC grant LR14A010001.\\
\indent Corresponding author: Huo-Jun Ruan}

\subjclass[2010]{Primary 28A80; Secondary 41A30,47B39.}



\keywords{Dirichlet Problem, Fractal interpolation function, Sierpinski gasket, Energy, Laplacian.}

\begin{abstract}
In this paper, we first characterize the finiteness of fractal interpolation functions (FIFs) on post critical finite self-similar sets. Then we study the Laplacian of FIFs with uniform vertical scaling factors on Sierpinski gasket (SG). As an application, we prove that the solution of the following Dirichlet problem on SG is an FIF with uniform vertical scaling factor $\frac{1}{5}$: $\Delta u=0$ on $SG\setminus \{q_1,q_2,q_3\}$, and $u(q_i)=a_i$, $i=1,2,3$, where $q_i$, $i=1,2,3$, are boundary points of SG.
\end{abstract}

\maketitle

\section{Introduction}

Fractal interpolation functions (FIFs) was introduced by Barnsley \cite{Bar} to model discrete data of natural scenes. Classically, the definition domains of these functions are segments, triangles and rectangles. Recently, \c{C}elik, Ko\c{c}ak and \"{O}zdemir \cite{CKO} and Ruan \cite{Ruan}  defined FIFs on Sierpinski gasket and post critically finite (p.c.f.) self-similar sets. Furthermore, Ruan \cite{Ruan} and Ri and Ruan \cite{RR} studied analytic properties of these functions, including energy, normal derivative and Laplacian. These results imply that the class of FIFs provides a large collection of explicit functions with finite energy and they are suitable to the theory of analysis on fractals.  In this paper, we will continue these work.

First, we recall some basic definitions of p.c.f. self-similar sets and FIFs. Let $P_i$, $i=1,2,\ldots,N$, be contractive similitudes in $\bR^n$. Then there exists a unique nonempty compact subset $K$ of $\bR^n$ satisfying $K=\bigcup_{i=1}^N P_i(K)$. Define $\Sigma_N=\{1,2,\ldots,N\}$ and $\Sigma_N^m=\{\omega_1 \omega_2\cdots \omega_m|\, \omega_j\in \Sigma_N \mbox{ for any }j\}$ for any $m\geq 1$. For $\omega \in \Sigma_N^m$, we say that $\omega$ is a word with length $|\omega|:=m$.  Define $\Sigma_N^*=\bigcup_{m=1}^\infty \Sigma_N^m$. For convention, we denote $\Sigma_N^0=\{\vartheta\}$, where $\vartheta$ is the empty word with length $0$.

Let $q_i$, $1\leq i\leq N$, be the fixed point of $P_i$. For $\omega=\omega_1\cdots\omega_m \in \Sigma_N^*$, we define
\begin{equation*}
  P_\omega=P_{\omega_1}\circ\cdots\circ P_{\omega_m}, \quad q_\omega=P_{\omega_1\cdots\omega_{m-1}}(q_{\omega_m}).
\end{equation*}
Then we can present the following definition of  p.c.f. self-similar sets  by Strichartz \cite{Str}, which is weaker than the original version by Kigami \cite{Kig01}.
\begin{definition}\label{def: pcf-def}
  The self-similar set $K$ defined above is call \emph{post critically finite}, or \emph{p.c.f.} for short, if $K$ is connected and there exists a finite set $V_0\subset K$ called the boundary, such that
  \begin{equation}\label{eq:pcf-cond}
    P_\omega(K) \cap P_{\omega^\prime} (K) = P_\omega(V_0) \cap P_{\omega^\prime} (V_0), \quad \mbox{for } \omega\not=\omega^\prime \mbox{ with } |\omega|=|\omega^\prime|,
  \end{equation}
  with the intersection disjoint from $V_0$. Moreover, we require that $V_0$ is the minimum subset of $\{q_1,q_2,\ldots,q_N\}$ which satisfies \eqref{eq:pcf-cond}. Without loss of generality, we suppose $V_0=\{q_1,q_2,\ldots,q_{N_0}\}$ for $N_0\leq N$.
\end{definition}

Let $V_m=\bigcup_{|\omega|=m} P_\omega(V_0)$ for any positive integer $m$ and $V_*=\bigcup_{m=1}^\infty V_m$.
We can define graph structure on $V_*$. Define $\Gamma_0$ to be the complete graph on vertex set $V_0$. For $m \geq 1$,  we define the graph $\Gamma_m$ on $V_m$ as follows: for any $x,y\in V_m$, the edge relation $x\sim_m y$ to hold if and only if there exists $\omega \in \Sigma_N^m$ such that $x,y \in P_\omega(V_0)$.

Let $\{c_{ij}\}_{1\leq i<j\leq N_0}$ and $\{r_i\}_{1\leq i\leq N}$ be all positive real numbers. Denote $r_\omega=r_{\omega_1}r_{\omega_1}\cdots r_{\omega_m}$ for any $\omega\in \Sigma_N^m$.    For $m=0,1,\ldots,$ we define graph energy $\mathcal{E}_{m}$ of a function $u$ on $V_m$ by
\begin{equation}\label{eq:Em-def}
  \mathcal{E}_{m}(u) = \sum_{x\sim_m y} c_m(x,y) (u(x)-u(y))^2,
\end{equation}
where $c_m(x,y)=r_\omega^{-1}c_{ij}$ if $x=P_\omega q_i, y=P_\omega q_j$ with $i<j$. If the graph energy sequences $\{\mathcal{E}_m\}$ satisfies
\begin{equation}\label{eq:energy-condition}
  \mathcal{E}_{m-1}(u) = \min \mathcal{E}_m (\widetilde{u}),
\end{equation}
where the minimum is taken over all $\widetilde{u}$ satisfying $\widetilde{u}|_{V_{m-1}}=u$ for all $u:\, K\to \bR$ and for all $m\geq 1$, then we call
\begin{equation}\label{eq:energy-def}
  \mathcal{E}(u) = \lim_{m\to\infty} \mathcal{E}_m(u)
\end{equation}
the \emph{energy} of $u$ on $K$ w.r.t. $\{c_{ij}\}$ and $\{r_i\}$, or the energy of $u$ if no confusion will occur. We call $u$ a \emph{harmonic function} if $\mathcal{E}_{m-1}(u)=\mathcal{E}_m(u)$ for all $m\geq 1$. By \eqref{eq:energy-condition}, for each function $u$ on $V_*$, the sequence $\{\mathcal{E}_m(u)\}_{m=0}^\infty$ is increasing. We call $u$ has \emph{finite energy} if $\lim_{m\to \infty} \mathcal{E}_m(u) < +\infty$.

It is well known that for Sierpinski gasket (SG), we can take $c_{ij}=1$ for all $1\leq i<j\leq 3$ and $r_i=3/5$ for all $i$ so that the corresponding graph energy sequence satisfies \eqref{eq:energy-condition}. Throughout the paper, we always suppose that the energy $\mathcal{E}$ on $K$ is well defined.

From \cite{Ruan}, we have the following result.
\begin{theorem}\label{th:FIF-def-pcf}
  Let $K$ be the p.c.f. self-similar set determined by $\{P_i\}_{i\in \Sigma}$.
  Let $B:V_1 \rightarrow \mathbb{R}$ be a given function. For any given numbers $d_i \in (-1,1)$, $i\in \Sigma_N$, there exists a unique continuous function $f:K \rightarrow \mathbb{R}$, such that $f|_{V_1}=B$ and
  \begin{equation}\label{eq:uni-FIF}
   f(P_i(x))=d_i f(x)+h_i(x)
  \end{equation}
  for $x \in K$, where $h_i$ are harmonic functions on $K$ for all $i\in\Sigma_N$. $f$ is called a \emph{fractal interpolation function} defined by \emph{basic function} $B$ and \emph{vertical scaling factors} $d_i$, $i\in \Sigma_N$.
\end{theorem}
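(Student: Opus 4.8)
The plan is to realize $f$ as the unique fixed point of a Read--Bajraktarević operator, after first pinning down the harmonic functions $h_i$. First I would record that $V_0\subset V_1$, since each boundary point $q_j$ is the fixed point of $P_j$ and hence $q_j=P_j(q_j)\in P_j(V_0)\subset V_1$; consequently the data $B$ already forces $f(q_j)=B(q_j)$ for $1\le j\le N_0$. Evaluating the desired relation \eqref{eq:uni-FIF} at $x=q_j\in V_0$ gives $B(P_i(q_j))=d_i B(q_j)+h_i(q_j)$, so the boundary values of each $h_i$ are determined by
\begin{equation*}
  h_i(q_j)=B(P_i(q_j))-d_i\,B(q_j),\qquad 1\le j\le N_0.
\end{equation*}
Since a harmonic function on $K$ is uniquely determined by its values on $V_0$ (the energy being well defined), this specifies each $h_i$ uniquely; this yields uniqueness of the $h_i$ and will later feed into the uniqueness of $f$.

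Next I would introduce the operator $T$ on the complete metric space $\mathcal{C}=\{g\in C(K): g|_{V_0}=B|_{V_0}\}$, a closed subset of $C(K)$ with the sup-norm, by setting
\begin{equation*}
  (Tg)(y)=d_i\,g(P_i^{-1}(y))+h_i(P_i^{-1}(y)),\qquad y\in P_i(K).
\end{equation*}
The fixed points of $T$ are precisely the continuous solutions of \eqref{eq:uni-FIF}, so the whole theorem reduces to showing that $T$ is a well-defined contraction of $\mathcal{C}$ into itself.

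The main obstacle is well-definedness of $T$ at the junction points, where two cells $P_i(K)$ and $P_j(K)$ overlap. Here I would invoke the p.c.f. condition \eqref{eq:pcf-cond}: any $z\in P_i(K)\cap P_j(K)$ lies in $P_i(V_0)\cap P_j(V_0)\subset V_1$, say $z=P_i(q_a)=P_j(q_b)$ with $q_a,q_b\in V_0$. For $g\in\mathcal{C}$ one then computes, using $g|_{V_0}=B|_{V_0}$ and the formula for $h_i(q_a)$,
\begin{equation*}
  d_i\,g(q_a)+h_i(q_a)=d_i\,B(q_a)+B(P_i(q_a))-d_i\,B(q_a)=B(z),
\end{equation*}
and the $j$-cell expression equals $B(z)$ in the same way, so the two definitions agree. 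Taking $z=q_j$ in this computation shows $(Tg)|_{V_0}=B|_{V_0}$, so $T$ maps $\mathcal{C}$ into $\mathcal{C}$; continuity of $Tg$ then follows from the pasting lemma applied to the finite closed cover $\{P_i(K)\}_{i\in\Sigma_N}$, since the pieces agree on their overlaps.

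Finally, contraction is immediate: for $g_1,g_2\in\mathcal{C}$ and $y\in P_i(K)$,
\begin{equation*}
  |(Tg_1)(y)-(Tg_2)(y)|=|d_i|\,\bigl|g_1(P_i^{-1}(y))-g_2(P_i^{-1}(y))\bigr|\le\Bigl(\max_{i\in\Sigma_N}|d_i|\Bigr)\,\|g_1-g_2\|_\infty,
\end{equation*}
and $\max_{i}|d_i|<1$ because the finitely many $d_i$ all lie in $(-1,1)$. The Banach fixed point theorem then gives a unique $f\in\mathcal{C}$ with $Tf=f$, that is, satisfying \eqref{eq:uni-FIF}. Evaluating the fixed-point equation at a point $P_i(q_j)\in V_1$ gives $f(P_i(q_j))=d_i B(q_j)+h_i(q_j)=B(P_i(q_j))$, so $f|_{V_1}=B$ holds automatically; conversely, any solution of the theorem lies in $\mathcal{C}$ and is a fixed point of $T$, whence uniqueness.
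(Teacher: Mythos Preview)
Your argument is correct and is the standard Read--Bajraktarevi\'c/Banach fixed point proof for fractal interpolation functions. Note, however, that the paper does not actually prove this theorem: it is quoted as a known result from \cite{Ruan} (``From \cite{Ruan}, we have the following result''), so there is no in-paper proof to compare against. Your approach---pinning down the $h_i$ from the forced boundary values $h_i(q_j)=B(P_i(q_j))-d_iB(q_j)$, checking compatibility on overlaps via the p.c.f.\ condition, and contracting on $\{g\in C(K):g|_{V_0}=B|_{V_0}\}$---is exactly the argument used in that reference, so your proposal matches the intended proof. One cosmetic remark: when you write ``taking $z=q_j$ in this computation'', strictly speaking $q_j$ is not a junction point (by Definition~\ref{def: pcf-def} the overlaps are disjoint from $V_0$, so $q_j\in P_j(K)$ only), but the same one-cell computation $d_jB(q_j)+h_j(q_j)=B(q_j)$ still gives $(Tg)(q_j)=B(q_j)$, so the conclusion stands.
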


In \cite{Ruan}, Ruan proved that the FIF $f$ defined above has finite energy if $\sum_{i=1}^N r_i^{-1}d_i^2<1/2$. In \cite{RR}, Ri and Ruan focused the  FIFs on SG with following conditions: $B|_{V_0}=0$, $B|_{V_1\setminus V_0}=1$ and $d_1=d_2=d_3$. These FIFs are called \emph{uniform FIFs} on SG. They showed that the uniform FIF has finite energy if and only if $d^2<1/5$, where $d$ is the common value of $d_i$. Another main result in \cite{RR} is:
\begin{theorem}\label{thm: Lap-RR}
  Let $f$ be the uniform FIF on SG with vertical scaling factor $d=\frac{1}{5}$. Let $\alpha$ be a given real number. Then $\frac{-\alpha f}{15}$ is the unique solution of the following Dirichlet problem: $u|_{V_0}=0$, and $\Delta u(x)=\alpha$ for all $x\in SG\setminus V_0$.
\end{theorem}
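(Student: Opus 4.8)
The plan is to avoid computing $\Delta f$ from scratch and instead to identify $f$ with the (already known to exist and be unique) solution of the relevant Poisson equation, invoking the uniqueness part of Theorem~\ref{th:FIF-def-pcf}. Recall the standard self-similar scaling of the Laplacian on SG, namely $\Delta(u\circ P_i)=\frac15(\Delta u)\circ P_i$ for every $u$ in the domain of $\Delta$. Since the Dirichlet problem $\Delta w=-15$ on $SG\setminus V_0$, $w|_{V_0}=0$, has a unique solution $w$ (standard Poisson theory on SG), it suffices to prove that $w$ coincides with the uniform FIF $f$ of vertical scaling factor $d=\frac15$. The assertion for $u=\frac{-\alpha f}{15}$ then follows by linearity, $\Delta(\frac{-\alpha f}{15})=\frac{-\alpha}{15}(-15)=\alpha$, together with uniqueness for the stated problem (the difference of two solutions is harmonic with zero boundary values, hence vanishes).

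First I would show that $w$ satisfies the FIF functional equation \eqref{eq:uni-FIF} with $d_i=\frac15$. Setting $h_i:=w\circ P_i-\frac15 w$, the scaling identity and $\Delta w=-15$ give $\Delta h_i=\frac15(\Delta w)\circ P_i-\frac15\Delta w=\frac15(-15)-\frac15(-15)=0$, so each $h_i$ is harmonic on SG and $w\circ P_i=\frac15 w+h_i$. Thus $w$ is a uniform FIF with $d=\frac15$, and by the uniqueness in Theorem~\ref{th:FIF-def-pcf} it remains only to check that $w|_{V_1}=B$, i.e. that $w$ vanishes on $V_0$ (given) and equals $1$ at the three edge midpoints $p_{ij}=P_iq_j=P_jq_i$.

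The value $\beta:=w(p_{ij})$, common to all midpoints by the $S_3$-symmetry of the data, is the one genuine computation. Evaluating $w\circ P_1=\frac15 w+h_1$ at $q_1,q_2,q_3$ shows $h_1$ is the harmonic function with boundary values $(0,\beta,\beta)$, and such a function satisfies $h_1\circ P_1=\frac35 h_1$ (from the harmonic extension rule on SG). I would then solve the resulting linear recursion $s_m=\frac15 s_{m-1}+\beta(3/5)^{m-1}$, $s_0=0$, for $s_m:=w(P_1^m q_2)$, obtaining $s_m=\frac52\beta\big((3/5)^m-(1/5)^m\big)$. Feeding this into the normal-derivative formula $\partial_n w(q_1)=\lim_{m\to\infty}(5/3)^m\big(2w(q_1)-w(P_1^mq_2)-w(P_1^mq_3)\big)$ yields $\partial_n w(q_1)=-5\beta$, and by symmetry $\sum_{i}\partial_n w(q_i)=-15\beta$. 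Finally the Gauss--Green formula $\sum_i\partial_n w(q_i)=\int_{SG}\Delta w\,d\mu=-15$, with $\mu$ the normalized self-similar measure, forces $\beta=1$; hence $w|_{V_1}=B$ and $w=f$.

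I expect the main obstacle to be precisely this computation of the midpoint value: one must correctly track the interplay between the vertical scaling factor $\frac15$ of the FIF and the energy renormalization factor $\frac53$ of SG, so that the geometric rates $(3/5)^m$ and $(5/3)^m$ cancel in the normal-derivative limit to produce a finite, nonzero value. It is exactly the choice $d=\frac15$ that both makes each $h_i$ harmonic and makes this cancellation occur, which is why the conclusion is special to this scaling factor. All remaining ingredients --- solvability and uniqueness of the Poisson problem, the scaling identity for $\Delta$, the Gauss--Green formula, and the harmonic extension rule --- are standard facts of analysis on SG.
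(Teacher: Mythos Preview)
Your argument is correct and takes a route genuinely different from the paper's. The paper (in Section~3) computes the graph Laplacians $\Delta_{m+1}f(q_{ij})$ of a general FIF with uniform vertical scaling factor $d$ explicitly through the recursions of Lemmas~\ref{lem:harm-recurrent}--\ref{lem:delfm+1}, then uses the self-similarity $\Delta_{|\omega|+m+1}f(q_{\omega ij})=d^{|\omega|}\Delta_{m+1}f(q_{ij})$ (Lemma~\ref{5lem:DF}) to pass from $V_1\setminus V_0$ to all of $V_*\setminus V_0$, and finally analyzes for which $d$ and which boundary data the renormalized limit $\tfrac{3}{2}\,5^{m}\Delta_m f$ converges; the Dirichlet-problem statement is then read off as a corollary. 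You instead start from the abstract Poisson solution $w$, use the scaling law $\Delta(u\circ P_i)=\tfrac15(\Delta u)\circ P_i$ to deduce directly that $w$ satisfies the FIF functional equation with $d=\tfrac15$, and identify $w|_{V_1}$ via the normal-derivative recursion together with the Gauss--Green identity $\sum_i\partial_n w(q_i)=\int_{SG}\Delta w\,d\mu$. Your approach is more conceptual and makes transparent why $d=\tfrac15$ is singled out (it is exactly the Laplacian renormalization factor on SG), at the cost of invoking heavier machinery (existence for the Poisson problem, normal derivatives, Gauss--Green). The paper's direct computation is more self-contained and, more importantly, yields the stronger classification of \emph{all} FIFs with uniform scaling factor for which $\Delta f$ exists, of which the Dirichlet result is a special case.
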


In this paper, we first prove that an FIF $f$ on p.c.f. self-similar sets has finite energy if and only if $\sum_{i=1}^N r_i^{-1} d_i^2<1$. This completely characterize the finiteness of energy of FIFs. Then we generalize Theorem~\ref{thm: Lap-RR} to FIFs on SG with the condition $d_1=d_2=d_3$, while there is no restriction on the basic function. We call such function \emph{an FIF on SG with uniform vertical scaling factor $d$}, where $d$ is the common value of $d_i$.

The paper is organized as follows.
In Section 2, we present the sufficient and necessary condition such that FIFs on p.c.f. self-similar sets have finite energy. In Section 3, we study the existence of Laplacian of FIFs on SG with uniform vertical scaling factor, and generalize  Theorem~\ref{thm: Lap-RR}.

\section{Characterization of finite energy of FIFs on p.c.f. self-similar sets}
Let $\mathcal{E}$ be the energy on a p.c.f. self-similar set $K$ defined by \eqref{eq:Em-def} and \eqref{eq:energy-def}.

Given two function $u,v$ on $V_m$, where $m\geq 0$, we define
\begin{equation*}
  \mathcal{E}_m(u,v) = \sum_{x\sim_m y} c_m(x,y) (u(x)-u(y))(v(x)-v(y)).
\end{equation*}

We need the following basic lemma to characterize the finiteness of energy. Essentially, this is equivalent to Lemma~3.2.16 in \cite{Kig01}. We sketch the proof for the completeness.
\begin{lemma}\label{3lem:energy}
Let $u,v$ be defined on $V_m$, let $\widetilde{u} $ be the harmonic extension of $u$, and let $v^\prime$ be any extension of $v$ to $V_{m+1}.$ Then
\begin{equation*}
 \mathcal{E}_{m+1}(\widetilde{u},v^\prime)=\mathcal{E}_m(u,v).
\end{equation*}
\end{lemma}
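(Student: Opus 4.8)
The plan is to exploit the variational characterization of the harmonic extension together with bilinearity and polarization, thereby reducing the bilinear identity to the diagonal identity $\mathcal{E}_{m+1}(\widetilde{u})=\mathcal{E}_m(u)$ that is already encoded in the renormalization condition \eqref{eq:energy-condition}.

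\textbf{First,} I would record the first-variation (orthogonality) property of the harmonic extension. By \eqref{eq:energy-condition} (shifting the index), $\widetilde{u}$ is the minimizer of $\mathcal{E}_{m+1}(w)$ among all extensions $w$ of $u$ to $V_{m+1}$, and this minimum equals $\mathcal{E}_m(u)$. Hence for any function $\phi$ on $V_{m+1}$ with $\phi|_{V_m}=0$, the perturbation $\widetilde{u}+t\phi$ is again an extension of $u$, so $t\mapsto \mathcal{E}_{m+1}(\widetilde{u}+t\phi)$ attains its minimum at $t=0$. Expanding this quadratic polynomial in $t$ by bilinearity and setting its linear coefficient to zero yields
\[
  \mathcal{E}_{m+1}(\widetilde{u},\phi)=0 \quad \text{whenever } \phi|_{V_m}=0.
\]

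\textbf{Second,} I would use this to replace the arbitrary extension $v^\prime$ by the harmonic extension $\widetilde{v}$ of $v$. Since $\phi:=v^\prime-\widetilde{v}$ vanishes on $V_m$, the bilinearity of $\mathcal{E}_{m+1}(\cdot,\cdot)$ and the orthogonality relation give $\mathcal{E}_{m+1}(\widetilde{u},v^\prime)=\mathcal{E}_{m+1}(\widetilde{u},\widetilde{v})$, so it suffices to prove $\mathcal{E}_{m+1}(\widetilde{u},\widetilde{v})=\mathcal{E}_m(u,v)$. \textbf{Third,} I would obtain this last identity by polarization. The harmonic extension is a linear operator (it solves the linear Euler--Lagrange system produced in the first step), so $\widetilde{u+v}=\widetilde{u}+\widetilde{v}$, while the diagonal identity gives $\mathcal{E}_{m+1}(\widetilde{w})=\mathcal{E}_m(w)$ for every $w$. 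Applying this with $w=u,v,u+v$ and expanding both $\mathcal{E}_{m+1}(\widetilde{u}+\widetilde{v})$ and $\mathcal{E}_m(u+v)$ by bilinearity, the three diagonal terms cancel and leave $2\mathcal{E}_{m+1}(\widetilde{u},\widetilde{v})=2\mathcal{E}_m(u,v)$, which finishes the argument.

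The only genuinely delicate point, and the one I expect to be the main obstacle, is justifying the variational input cleanly: that the harmonic extension exists, is unique, and depends \emph{linearly} on the boundary data $u$. This rests on $\mathcal{E}_{m+1}$ being a nonnegative quadratic form that is positive definite on the subspace of functions vanishing on $V_m$, which follows from the connectivity of $\Gamma_{m+1}$ together with $V_0\subset V_m$; strict convexity then makes the minimization problem have a unique solution given by a linear (homogeneous) map of $u$. Granting this, the remaining steps are routine bilinear algebra.
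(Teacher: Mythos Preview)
Your argument is correct and follows the same route as the paper: both hinge on the orthogonality relation $\mathcal{E}_{m+1}(\widetilde{u},\phi)=0$ for $\phi|_{V_m}=0$, then reduce the bilinear identity to the diagonal one. The paper merely sketches this, citing Strichartz for the reduction step and Kigami (Lemma~3.2.16) for the orthogonality, whereas you supply both pieces self-containedly via the first-variation and polarization arguments; your added discussion of existence, uniqueness, and linearity of the harmonic extension is exactly what underlies the cited results.
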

\begin{proof}
  Similarly as the case of SG (see the proof of Lemma~1.3.1 in \cite{Str}), it suffices to show that $\mathcal{E}_{m+1}(\widetilde{u},v^{\prime\prime})=0$ for all functions $v^{\prime\prime}$ on $V_{m+1}$ with $v^{\prime\prime}=0$ on $V_m$. However, this has already been proved in the proof of Lemma~3.2.16 in \cite{Kig01}. Thus the theorem holds.
\end{proof}

\begin{remark}
   Define an $N\times N$ matrix $C$ by $C_{ji}=C_{ij}=c_{ij}$ for $1\leq i<j\leq N$ and $C_{ii}=-\sum_{j\not=i} C_{ij}$. Denote $\mathbf{r}=(r_1,r_2,\ldots,r_N)$. Then the existence of energy is equivalent to that $(C,\mathbf{r})$ is a harmonic structure on $(K,\Sigma_N,\{P_i\}_{i\in \Sigma_N})$. Thus the results in \cite{Kig01} are applicable. See \cite{Kig01} for details.
\end{remark}

The following theorem characterize the  finiteness of energy of FIFs.

\begin{theorem}
  Let $f$ be an FIF on the p.c.f. self-similar set $K$ determined as in Theorem~\ref{th:FIF-def-pcf}. Then  $\mathcal{E}(f)<\infty$ if and only if either $\mathcal{E}_1(f)=\mathcal{E}_0(f)$ or $\sum_{k=1}^N r_k^{-1} d_k^2<1$. Furthermore, in case that $\mathcal{E}_1(f)=\mathcal{E}_0(f)$,  $f$ is a harmonic function so that $\mathcal{E}(f)=\mathcal{E}_0(f)$; and in case that $\sum_{k=1}^N r_k^{-1}d_k^2<1,$
 \begin{equation*}
 \mathcal{E}(f)=\mathcal{E}_0(f) + \frac{1}{1-\Big(\sum_{k=1}^N r_k^{-1} d_k^2\Big)} (\mathcal{E}_1(f)-\mathcal{E}_0(f)).
 \end{equation*}
\end{theorem}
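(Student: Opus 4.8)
The plan is to reduce everything to a single affine recursion for the graph energies $\mathcal{E}_m(f)$ and then read off the convergence from elementary properties of such a recursion. The starting point is the self-similarity of the graph energy. Splitting each word $\omega\in\Sigma_N^m$ as $\omega=i\omega'$ with $|\omega'|=m-1$ and using $c_m(P_ix,P_iy)=r_i^{-1}c_{m-1}(x,y)$, the sum defining $\mathcal{E}_m$ in \eqref{eq:Em-def} factors over the first letter, giving
\begin{equation*}
  \mathcal{E}_m(u)=\sum_{i=1}^N r_i^{-1}\,\mathcal{E}_{m-1}(u\circ P_i)
\end{equation*}
for every $u$ and every $m\geq 1$, and likewise for the bilinear form $\mathcal{E}_m(u,v)$.

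Next I would insert the defining relation $f\circ P_i=d_i f+h_i$ from Theorem~\ref{th:FIF-def-pcf} and expand by bilinearity:
\begin{equation*}
  \mathcal{E}_{m-1}(f\circ P_i)=d_i^2\,\mathcal{E}_{m-1}(f)+2d_i\,\mathcal{E}_{m-1}(f,h_i)+\mathcal{E}_{m-1}(h_i).
\end{equation*}
Because each $h_i$ is harmonic, two simplifications occur. First, $\mathcal{E}_{m-1}(h_i)=\mathcal{E}_0(h_i)$ for all $m$. Second --- and this is the step I expect to be the crux --- the cross term $\mathcal{E}_{m-1}(f,h_i)$ is also independent of $m$. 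To see this I would apply Lemma~\ref{3lem:energy} with $u=h_i|_{V_{m-1}}$: since $h_i$ is harmonic, its restriction to $V_m$ is exactly the harmonic extension $\widetilde u$ of $h_i|_{V_{m-1}}$, so taking $v'=f|_{V_m}$ as an arbitrary extension of $f|_{V_{m-1}}$ the lemma yields $\mathcal{E}_m(h_i,f)=\mathcal{E}_{m-1}(h_i,f)$; by symmetry and induction $\mathcal{E}_{m-1}(f,h_i)=\mathcal{E}_0(f,h_i)$ for every $m$. Feeding these back and writing $\lambda:=\sum_{k=1}^N r_k^{-1}d_k^2$, the self-similarity identity collapses to the affine recursion
\begin{equation*}
  \mathcal{E}_m(f)=\lambda\,\mathcal{E}_{m-1}(f)+C,\qquad C:=\sum_{i=1}^N r_i^{-1}\bigl(2d_i\,\mathcal{E}_0(f,h_i)+\mathcal{E}_0(h_i)\bigr),
\end{equation*}
valid for all $m\geq 1$. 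Evaluating at $m=1$ identifies the constant as $C=\mathcal{E}_1(f)-\lambda\,\mathcal{E}_0(f)$, which is the device that lets me express the final limit purely in terms of $\mathcal{E}_0(f)$ and $\mathcal{E}_1(f)$.

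It then remains to solve the recursion and analyze cases. Writing it as $\mathcal{E}_m(f)-E^\ast=\lambda(\mathcal{E}_{m-1}(f)-E^\ast)$ with fixed point $E^\ast=C/(1-\lambda)$ when $\lambda\neq1$, iteration gives $\mathcal{E}_m(f)=E^\ast-\lambda^m(\mathcal{E}_1(f)-\mathcal{E}_0(f))/(1-\lambda)$, and a short computation rewrites $E^\ast$ as $\mathcal{E}_0(f)+\frac{1}{1-\lambda}(\mathcal{E}_1(f)-\mathcal{E}_0(f))$. If $\mathcal{E}_1(f)=\mathcal{E}_0(f)$, the recursion forces $\mathcal{E}_m(f)=\mathcal{E}_0(f)$ for all $m$ (for every value of $\lambda$, including $\lambda=1$), so $f$ is harmonic and $\mathcal{E}(f)=\mathcal{E}_0(f)$. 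If instead $\mathcal{E}_1(f)>\mathcal{E}_0(f)$ --- the only other possibility, since $\{\mathcal{E}_m(f)\}$ is increasing by \eqref{eq:energy-condition} --- then for $\lambda<1$ the factor $\lambda^m\to0$ and $\mathcal{E}_m(f)\to E^\ast$, giving the stated formula; for $\lambda=1$ one has $\mathcal{E}_m(f)=\mathcal{E}_0(f)+m(\mathcal{E}_1(f)-\mathcal{E}_0(f))\to\infty$; and for $\lambda>1$ the term $-\lambda^m(\mathcal{E}_1(f)-\mathcal{E}_0(f))/(1-\lambda)$ is a positive multiple of $\lambda^m$ and again diverges. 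This exhausts the cases and matches the claimed dichotomy. The only genuinely delicate point is the invariance of the cross term $\mathcal{E}_{m-1}(f,h_i)$, where the harmonicity of $h_i$ must be used precisely through Lemma~\ref{3lem:energy}; the remainder is bookkeeping on the scalar recursion.
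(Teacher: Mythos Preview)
Your proposal is correct and follows essentially the same route as the paper: both derive the affine recursion $\mathcal{E}_m(f)=\lambda\,\mathcal{E}_{m-1}(f)+C$ with $\lambda=\sum_k r_k^{-1}d_k^2$ by using the self-similarity of $\mathcal{E}_m$, the relation $f\circ P_i=d_i f+h_i$, and Lemma~\ref{3lem:energy} to freeze both $\mathcal{E}_{m-1}(h_i)$ and the cross term $\mathcal{E}_{m-1}(f,h_i)$ at level $0$, then identify $C=\mathcal{E}_1(f)-\lambda\mathcal{E}_0(f)$ and solve the recursion case by case. Your explicit use of the monotonicity of $\mathcal{E}_m(f)$ to exclude $\mathcal{E}_1(f)<\mathcal{E}_0(f)$ is a clean way to phrase the case split, but otherwise the arguments coincide.
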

\begin{proof}
By definition, for $m \geq 1,$
\begin{align*}
\mathcal{E}_m(f)&=\sum_{\omega \in \Sigma_N^m}\sum_{1\leq i<j\leq N_0} r_\omega^{-1}c_{ij}\left(f(q_{\omega i})-f(q_{\omega j})\right)^2\\
&=\sum_{\tau \in \Sigma_N^{m-1}}\sum_{1\leq i<j\leq N_0} r_{\tau}^{-1}\Big(\sum_{k\in\Sigma_N}r_k^{-1}c_{ij}\left(f(q_{k\tau i})-f(q_{k\tau j})\right)^2\Big),
\end{align*}
where we define $q_{k\vartheta i} = q_{ki}$ in case that $m=1$.
For any $\tau \in \Sigma_N^{m-1}$ and $1\leq i<j\leq N_0$, we have
\begin{align*}
 \sum_{k\in \Sigma_N}\left(f(q_{k\tau i})-f(q_{k\tau j})\right)^2 = & \sum_{k=1}^N\Big(d_k(f(q_{\tau i})-f(q_{\tau j}))+h_k(q_{\tau i})-h_k(q_{\tau j})\Big)^2\\
   = &\sum_{k=1}^N \Big( d_k^2(f(q_{\tau i})-f(q_{\tau j}))^2+ (h_k(q_{\tau i})-h_k(q_{\tau j}))^2 \\
   & +2d_k(f(q_{\tau i})-f(q_{\tau j}))(h_k(q_{\tau i})-h_k(q_{\tau j})) \Big).
\end{align*}
It follows that
\begin{align*}
 \mathcal{E}_m(f)
   =&  \sum_{k=1}^N r_k^{-1} \Big( d_k^2 \mathcal{E}_{m-1}(f)+ \mathcal{E}_{m-1}(h_k)+2d_k \mathcal{E}_{m-1}(f,h_k)\Big)\\
   =&   \sum_{k=1}^N r_k^{-1} \Big( d_k^2 \mathcal{E}_{m-1}(f)+ \mathcal{E}_{0}(h_k)+2d_k \mathcal{E}_{0}(f,h_k)\Big)
\end{align*}
where the last equality follows from Lemma~\ref{3lem:energy}. Noticing that from the above equalities, we have
$\mathcal{E}_1(f)= \sum_{k=1}^N r_k^{-1} \Big( d_k^2 \mathcal{E}_{0}(f)+ \mathcal{E}_{0}(h_k)+2d_k \mathcal{E}_{0}(f,h_k)\Big)$.
Thus for $m \geq 1$, we have
 \begin{equation}\label{eq:Em-iter}
 \mathcal{E}_m(f)=\Big(\sum_{k=1}^N r_k^{-1}d_k^2\Big)\Big(\mathcal{E}_{m-1}(f)-\mathcal{E}_0(f)\Big) + \mathcal{E}_1(f).
 \end{equation}

 Denote $\delta=\sum_{k=1}^N r_k^{-1}d_k^2$. In case that $\delta=1$, we have $\mathcal{E}_m(f)=\mathcal{E}_{0}(f)+m\Big(\mathcal{E}_1(f)-\mathcal{E}_0(f)\Big)$. Thus, $\mathcal{E}(f)<\infty$ if and only if $\mathcal{E}_1(f)=\mathcal{E}_0(f)$. And under this condition, we have $\mathcal{E}_m(f)=\mathcal{E}_0(f)$ for all $m$ so that $f$ is a harmonic function.

 In case that $\delta\not=1$. From \eqref{eq:Em-iter}, we can obtain that for all $m\geq 1$,
 \begin{equation*}
   \mathcal{E}_m(f) + \frac{\delta}{1-\delta}\mathcal{E}_0(f) -\frac{1}{1-\delta} \mathcal{E}_1(f) = \delta\Big( \mathcal{E}_{m-1}(f) + \frac{\delta}{1-\delta}\mathcal{E}_0(f) -\frac{1}{1-\delta} \mathcal{E}_1(f)\Big)
 \end{equation*}
so that
 \begin{equation}\label{eq:Em-formula}
   \mathcal{E}_m(f)=\frac{\delta^{m}}{1-\delta} \Big(\mathcal{E}_0(f)-\mathcal{E}_1(f)\Big) + \mathcal{E}_0(f) + \frac{1}{1-\delta} (\mathcal{E}_1(f)-\mathcal{E}_0(f)).
 \end{equation}
 Thus, $f$ has finite energy if and only if either $\mathcal{E}_1(f)=\mathcal{E}_0(f)$ or $\delta<1$. If $\mathcal{E}_1(f)=\mathcal{E}_0(f)$, we can see from \eqref{eq:Em-formula} that $\mathcal{E}_m(f)=\mathcal{E}_0(f)$ for all $m$ so that $f$ is a harmonic function. If $\delta<1$, using \eqref{eq:Em-formula} again, we can see that $\mathcal{E}(f)=\mathcal{E}_0(f) + \frac{1}{1-\delta} (\mathcal{E}_1(f)-\mathcal{E}_0(f))$. By the definition of $\delta$, we know that the theorem holds.
\end{proof}

\begin{corollary}\label{corol:energy-SG}
Let $f$ be an FIF on SG with uniform vertical scaling factor $d$. Then $\mathcal{E}(f)<\infty$ if and only if either $\mathcal{E}_1(f)=\mathcal{E}_0(f)$ or $\sum_{k=1}^3 d_k^2<\frac{3}{5}$. Furthermore, in case that $\mathcal{E}_1(f)=\mathcal{E}_0(f)$,  $f$ is a harmonic function so that $\mathcal{E}(f)=\mathcal{E}_0(f)$; and in case that $\sum_{k=1}^3 d_k^2<\frac{3}{5},$
 \begin{equation*}
 \mathcal{E}(f)=\mathcal{E}_0(f) + \frac{1}{1-\frac{5}{3}\Big(\sum_{k=1}^3 d_k^2\Big)} (\mathcal{E}_1(f)-\mathcal{E}_0(f)).
 \end{equation*}
\end{corollary}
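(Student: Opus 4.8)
The plan is to obtain the corollary as a direct specialization of the preceding theorem to the Sierpinski gasket. First I would recall the relevant structural data for SG: it is the p.c.f. self-similar set generated by $N=3$ contractive similitudes, all three fixed points are boundary vertices so that $N_0=3$, and, as noted in the introduction, the standard harmonic structure on SG is given by $c_{ij}=1$ for all $1\le i<j\le 3$ together with $r_i=3/5$ for every $i$. This choice makes the graph energy sequence $\{\mathcal{E}_m\}$ satisfy the self-similarity condition \eqref{eq:energy-condition}, so the energy $\mathcal{E}$ is well defined and the hypotheses of the theorem are met.

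Next I would compute the quantity $\delta=\sum_{k=1}^3 r_k^{-1}d_k^2$ appearing in the theorem. Since $r_k^{-1}=5/3$ for each $k$, this gives $\delta=\frac{5}{3}\sum_{k=1}^3 d_k^2$. The theorem asserts that $\mathcal{E}(f)<\infty$ precisely when $\mathcal{E}_1(f)=\mathcal{E}_0(f)$ or $\delta<1$; substituting the value of $\delta$, the latter condition becomes $\sum_{k=1}^3 d_k^2<3/5$, which is exactly the stated dichotomy.

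Finally I would read off the energy value. In the case $\mathcal{E}_1(f)=\mathcal{E}_0(f)$, the theorem already gives that $f$ is harmonic with $\mathcal{E}(f)=\mathcal{E}_0(f)$, and this conclusion carries over verbatim. In the case $\delta<1$, I would substitute $\delta=\frac{5}{3}\sum_{k=1}^3 d_k^2$ into the formula $\mathcal{E}(f)=\mathcal{E}_0(f)+\frac{1}{1-\delta}\bigl(\mathcal{E}_1(f)-\mathcal{E}_0(f)\bigr)$, so that the denominator becomes $1-\delta=1-\frac{5}{3}\sum_{k=1}^3 d_k^2$, yielding precisely the displayed expression.

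Since every step is a substitution of the SG parameters into the ambient theorem, there is essentially no obstacle here: the only point requiring any care is to confirm that the SG harmonic structure $(c_{ij}=1,\ r_i=3/5)$ indeed satisfies the energy condition \eqref{eq:energy-condition}, so that the theorem applies; this is the standard fact recalled earlier in the paper, and with it the corollary follows immediately.
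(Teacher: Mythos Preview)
Your proposal is correct and matches the paper's approach exactly: the corollary is stated immediately after the theorem with no separate proof, so the intended argument is precisely the direct substitution of the SG data $r_k=3/5$ into the general result, as you describe.
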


We remark that $\mathcal{E}_1(f)=\mathcal{E}_0(f)$ on SG if and only if $f$ satisfies the ``$\frac{1}{5}-\frac{2}{5}$ rule" at the points in $V_1\setminus V_0$. For details, please see Section~1.3 in \cite{Str}.

\section{Laplacian of FIFs on Sierpinski gasket }
In this section, we will discuss the Laplacian of FIFs with uniform vertical scaling factor $d$ on Sierpinski gasket. In this case, \eqref{eq:uni-FIF} can be replaced by
\begin{equation}\label{eq:FIF-def}
  f(P_i(x))=d\cdot f(x)+h_i(x), \quad x\in SG, \; i=1,2,3,
\end{equation}
where $h_i$, $i=1,2,3,$ are harmonic functions. The following property is the well-known ``$\frac{1}{5}-\frac{2}{5}$ rule" of harmonic functions on SG.
\begin{theorem}[\cite{Kig01,Str}]
Let $h$ be a harmonic function on SG. Let $(i,j,k)$ be a permutation of $(1,2,3)$. Then, for any $\omega\in\Sigma_3^*\cup \{\emptyset\}$, we have
\begin{equation*}
  h(q_{\omega ij})=\frac{2}{5}h(q_{\omega i})+\frac{2}{5}h(q_{\omega j})+\frac{1}{5}h(q_{\omega k}).
\end{equation*}
\end{theorem}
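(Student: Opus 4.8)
The plan is to reduce the general identity to the case of the empty word $\omega=\vartheta$ by exploiting the self-similarity of harmonic functions, and then to settle that base case by a direct energy-minimization computation. Throughout I use the SG data $c_{ij}=1$ and $r_i=3/5$, so that every edge of $\Gamma_1$ carries conductance $r_k^{-1}c_{ij}=5/3$.

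First I would treat $\omega=\vartheta$. Write $v_i=h(q_i)$ for the boundary values and, for each $i$, let $m_i$ denote the value of $h$ at the midpoint opposite $q_i$, i.e. $m_1=h(q_{23})$, $m_2=h(q_{13})$, $m_3=h(q_{12})$. Since a harmonic function minimizes $\mathcal{E}_1$ among all extensions of its restriction to $V_0$, the triple $(m_1,m_2,m_3)$ is the minimizer of the quadratic
$$ \mathcal{E}_1(h)=\frac{5}{3}\sum_{\text{edges of }\Gamma_1}\big(h(x)-h(y)\big)^2 $$
over $m_1,m_2,m_3$ with $v_1,v_2,v_3$ fixed. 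Setting $\partial \mathcal{E}_1/\partial m_i=0$ and collecting the four edges incident to each midpoint yields the symmetric linear system $4m_1=m_2+m_3+v_2+v_3$ together with its two cyclic analogues. Summing the three equations gives $m_1+m_2+m_3=v_1+v_2+v_3=:S$; substituting back produces $5m_1=2S-v_1$, hence $m_1=\frac{1}{5}v_1+\frac{2}{5}v_2+\frac{2}{5}v_3$, which is exactly $h(q_{23})=\frac{2}{5}h(q_2)+\frac{2}{5}h(q_3)+\frac{1}{5}h(q_1)$. This is the asserted rule for $\omega=\vartheta$.

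The step I regard as the real content is the propagation to arbitrary $\omega$, and it rests on the claim that $h\circ P_\omega$ is again harmonic on SG. To see this I would first record the cell decomposition of the energy: splitting each word of length $m$ according to its first letter and using $q_{k\sigma}=P_k(q_\sigma)$ gives
$$ \mathcal{E}_m(h)=\sum_{k=1}^{3} r_k^{-1}\,\mathcal{E}_{m-1}(h\circ P_k). $$
If $h$ is harmonic then $\mathcal{E}_m(h)=\mathcal{E}_{m-1}(h)$ for all $m$, so subtracting this identity for consecutive levels gives $\sum_k r_k^{-1}\big(\mathcal{E}_m(h\circ P_k)-\mathcal{E}_{m-1}(h\circ P_k)\big)=0$. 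Each summand is nonnegative, because $\{\mathcal{E}_m\}$ is increasing along any function, and each $r_k^{-1}>0$; therefore every summand vanishes, i.e. $h\circ P_k$ is harmonic for each $k$. Iterating along the letters of $\omega$ shows $h\circ P_\omega$ is harmonic for every $\omega\in\Sigma_3^*$. Finally I would apply the base case to $h\circ P_\omega$ and translate through $q_{\omega ij}=P_\omega(q_{ij})$ and $q_{\omega i}=P_\omega(q_i)$, which turns the empty-word identity into $h(q_{\omega ij})=\frac{2}{5}h(q_{\omega i})+\frac{2}{5}h(q_{\omega j})+\frac{1}{5}h(q_{\omega k})$, completing the proof. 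The main obstacle is isolating the self-similarity and locality of harmonicity in the second step; once the energy decomposition and monotonicity are in hand it reduces to a short positivity argument.
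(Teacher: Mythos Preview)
The paper does not prove this theorem; it is quoted from \cite{Kig01,Str} and used as background, so there is no in-paper proof to compare against. Your proof is correct. The base-case computation by energy minimization is precisely the standard derivation of the $\tfrac{1}{5}$--$\tfrac{2}{5}$ rule in those references. For the propagation to general $\omega$ you show that $h\circ P_\omega$ is harmonic via the cell decomposition $\mathcal{E}_m(h)=\sum_{k} r_k^{-1}\mathcal{E}_{m-1}(h\circ P_k)$ combined with the monotonicity of $\{\mathcal{E}_m\}$; the textbook treatments usually phrase the same content as the observation that the harmonic extension from $V_m$ to $V_{m+1}$ decouples into independent minimizations on each level-$m$ cell, each of which reproduces the base case. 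The two viewpoints are equivalent, and there is no gap in your argument.
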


Now we recall some definitions of Laplacian on SG. For any continuous function $u$ on SG, we define the \emph{graph Laplacian} $\Delta_m$ for positive integers $m$ by
\begin{equation*}
  \Delta_mu(x)=\sum_{y\sim_mx}(u(y)-u(x)), \quad x\in V_m\backslash V_0.
\end{equation*}

\begin{definition}
Suppose $g$ is a continuous function on SG. We say $u \in \dom \Delta$ with $\Delta u=g$ if
\begin{equation*}
 \frac{3}{2}5^{m}\Delta_{m}u(x)
\end{equation*}
converges uniformly to $g$ on  $V_*\backslash V_0$ as $m$ goes to infinity.
\end{definition}

It is well known that $u$ has finite energy if $u\in\dom \Delta$.
For harmonic function $h$, we have $\Delta_m h(x)=0$ for any $m\in \mathbb{Z}^+$ and any $x\in V_m\backslash V_0$  so that $\Delta h=0$. Please see \cite{Kig01,Str} for details.

 \begin{lemma}\label{lem:harm-recurrent}
   Let $h$ be a harmonic function on SG and $(i,j,k)$ be a permutation of $(1,2,3)$.  Then for any positive integer $m$,
   \begin{equation}\label{5eq:tempsum}
   h(q_{i^{m}j})+h(q_{i^{m}k})=2h(q_i)+\left(\frac{3}{5}\right)^m\left(h(q_j)+h(q_k)-2h(q_i)\right),
  \end{equation}
  where we denote $i^m$ to be the word $\underbrace{i\cdots i}_m$.
   \end{lemma}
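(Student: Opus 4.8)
The plan is to prove \eqref{5eq:tempsum} by setting up and solving a first-order linear recursion in $m$. I prefer this to a bare induction because it isolates the single genuine computation and makes the appearance of the factor $\frac{3}{5}$ transparent. Write $S_m := h(q_{i^m j}) + h(q_{i^m k})$, with $S_0 = h(q_j) + h(q_k)$ (the empty prefix $i^0$ giving $q_{i^0 j} = q_j$ and $q_{i^0 k} = q_k$). In this notation the assertion becomes the closed form $S_m = 2h(q_i) + \left(\tfrac{3}{5}\right)^m\big(S_0 - 2h(q_i)\big)$.

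The core step is to derive the recursion $S_{m+1} = \tfrac{4}{5}h(q_i) + \tfrac{3}{5}S_m$, valid for every $m \geq 0$. I apply the $\tfrac{1}{5}$-$\tfrac{2}{5}$ rule (the Theorem immediately preceding this lemma) with the prefix word $\omega = i^m$. The only two symbolic facts to verify are that $q_{i^m i} = q_i$, which holds because $q_i$ is the fixed point of $P_i$ so that $P_{i^m}(q_i) = q_i$, and that the word $i^m\! \cdot\! ij$ equals $i^{m+1}j$. Granting these, applying the rule to the permutations $(i,j,k)$ and $(i,k,j)$ of $(1,2,3)$ gives
\begin{align*}
  h(q_{i^{m+1}j}) &= \tfrac{2}{5}h(q_i) + \tfrac{2}{5}h(q_{i^m j}) + \tfrac{1}{5}h(q_{i^m k}),\\
  h(q_{i^{m+1}k}) &= \tfrac{2}{5}h(q_i) + \tfrac{2}{5}h(q_{i^m k}) + \tfrac{1}{5}h(q_{i^m j}),
\end{align*}
and summing the two lines collapses the cross terms into $\tfrac{3}{5}\big(h(q_{i^m j}) + h(q_{i^m k})\big)$, yielding $S_{m+1} = \tfrac{4}{5}h(q_i) + \tfrac{3}{5}S_m$ immediately. (Note this derivation is legitimate already at $m=0$, since the cited Theorem permits the empty prefix.)

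Finally I would solve the recursion. Its unique fixed point is $S^* = \tfrac{4/5}{1-3/5}\,h(q_i) = 2h(q_i)$, so $S_m - 2h(q_i) = \tfrac{3}{5}\big(S_{m-1} - 2h(q_i)\big)$ and hence $S_m - 2h(q_i) = \left(\tfrac{3}{5}\right)^m\big(S_0 - 2h(q_i)\big)$; substituting $S_0 = h(q_j) + h(q_k)$ recovers \eqref{5eq:tempsum} for all $m \geq 1$. There is essentially no analytic obstacle, since the harmonic structure enters only through the $\tfrac{1}{5}$-$\tfrac{2}{5}$ rule, which does all the work. The one point demanding care is the symbolic bookkeeping of words under $\omega \mapsto q_\omega$ — in particular the identity $q_{i^m i} = q_i$ arising from $q_i$ being fixed by $P_i$. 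Once the indices are pinned down, the remainder is the routine solution of a geometric recursion.
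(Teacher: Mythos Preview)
Your proof is correct and follows essentially the same route as the paper: both apply the $\tfrac{1}{5}$--$\tfrac{2}{5}$ rule at the prefix $i^m$ (using $q_{i^m i}=q_i$) to obtain the recursion $S_{m+1}=\tfrac{4}{5}h(q_i)+\tfrac{3}{5}S_m$, and then solve it. The only cosmetic difference is that the paper unrolls the recursion as a finite geometric sum, whereas you identify the fixed point $2h(q_i)$ and iterate the homogeneous part; these are equivalent.
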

   \begin{proof}
   From ``$\frac{1}{5}-\frac{2}{5}$ rule", we have
   \begin{equation*}
\begin{cases}
    h(q_{i^{m}j}) =\frac{2}{5}h(q_i)+\frac{2}{5}h(q_{i^{m-1}j})+\frac{1}{5}h(q_{i^{m-1}k}) \\
     h(q_{i^{m}k}) =\frac{2}{5}h(q_i)+\frac{1}{5}h(q_{i^{m-1}j})+\frac{2}{5}h(q_{i^{m-1}k})
\end{cases}
\end{equation*}
so that
\begin{align*}
   h(q_{i^{m}j})+h(q_{i^{m}k})&=\frac{4}{5}h(q_i)+\frac{3}{5}\left(h(q_{i^{m-1}j})+h(q_{i^{m-1}k})\right) \nonumber \\
   &=\frac{4}{5}h(q_i)\left(1+\frac{3}{5}+\cdots+\left(\frac{3}{5}\right)^{m-1}\right)+\left(\frac{3}{5}\right)^m(h(q_j)+h(q_k)) \nonumber \\
   &=2h(q_i)+\left(\frac{3}{5}\right)^m\left(h(q_j)+h(q_k)-2h(q_i)\right).
  \end{align*}
   \end{proof}

    In the sequel of the paper, for any function $u$ on $V_1$, we denote
 \begin{align*}
 &\Delta_0 u(q_i)=\sum_{y\sim_0q_i}(u(y)-u(q_i)),\\
  &\Delta_1^i u(x)=\sum_{ y\sim_1x \ \text{and} \  y\in P_i(V_0)}(u(y)-u(x)),\quad x\in V_1, \;  i=1,2,3.
 \end{align*}

 \begin{lemma}\label{lem:f+f}
  Let $f$ be an FIF with uniform vertical scaling factor $d\not=\frac{3}{5}$ on SG. Assume that $(i,j,k)$ be a permutation of $(1,2,3)$.  Then for  any nonnegative integer $m$,
 \begin{equation}\label{5eq:sumf23}
   f(q_{i^{m}j})+f(q_{i^{m}k})=2f(q_i)+d^m\Delta_0f(q_i)+\frac{\Delta_1^if(q_{i})-d\Delta_0f(q_i)}{3/5-d}\left[(\frac{3}{5})^m-d^m\right].
 \end{equation}
\end{lemma}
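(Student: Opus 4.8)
The plan is to introduce the shorthand $S_m := f(q_{i^{m}j}) + f(q_{i^{m}k})$ and to derive a first-order linear recurrence for $S_m$ directly from the self-similar identity \eqref{eq:FIF-def}. Since $q_{i^{m}j} = P_i(q_{i^{m-1}j})$ and $q_{i^{m}k} = P_i(q_{i^{m-1}k})$, applying \eqref{eq:FIF-def} to both points and adding gives, for $m \geq 1$,
\[
S_m = d\,S_{m-1} + \bigl(h_i(q_{i^{m-1}j}) + h_i(q_{i^{m-1}k})\bigr).
\]
The base case is read off directly: because $\Gamma_0$ is the complete graph on $V_0$, we have $\Delta_0 f(q_i) = f(q_j)+f(q_k)-2f(q_i)$, so $S_0 = f(q_j)+f(q_k) = 2f(q_i)+\Delta_0 f(q_i)$, which already matches the claimed formula \eqref{5eq:sumf23} at $m=0$ (the bracketed term vanishing there). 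Hence it suffices to solve the recurrence for $m\ge 1$.

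The key step is to evaluate the harmonic increment $h_i(q_{i^{m-1}j}) + h_i(q_{i^{m-1}k})$ and to rewrite it in terms of $f$. Applying Lemma~\ref{lem:harm-recurrent} to the harmonic function $h_i$ (with nonnegative exponent $m-1$: the case $m-1=0$ is trivial, and the case $m-1\ge 1$ is exactly \eqref{5eq:tempsum}) yields
\[
h_i(q_{i^{m-1}j}) + h_i(q_{i^{m-1}k}) = 2h_i(q_i) + (3/5)^{m-1}\bigl(h_i(q_j)+h_i(q_k)-2h_i(q_i)\bigr).
\]
I then convert the $h_i$-values into $f$-values via \eqref{eq:FIF-def} evaluated at the boundary points, namely $h_i(q_\ell) = f(q_{i\ell}) - d\,f(q_\ell)$ for $\ell=1,2,3$. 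Using the fixed-point relation $q_{ii}=q_i$, this gives $2h_i(q_i) = 2(1-d)f(q_i)$, and after grouping,
\[
h_i(q_j)+h_i(q_k)-2h_i(q_i) = \bigl(f(q_{ij})+f(q_{ik})-2f(q_i)\bigr) - d\bigl(f(q_j)+f(q_k)-2f(q_i)\bigr) = \Delta_1^i f(q_i) - d\,\Delta_0 f(q_i),
\]
which is precisely the numerator appearing in \eqref{5eq:sumf23}.

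It remains to solve $S_m = d\,S_{m-1} + 2(1-d)f(q_i) + (3/5)^{m-1}\bigl(\Delta_1^i f(q_i) - d\Delta_0 f(q_i)\bigr)$. A constant particular solution $2f(q_i)$ absorbs the term $2(1-d)f(q_i)$ (here $d\in(-1,1)$ guarantees $d\ne 1$), while a particular solution of the form $B(3/5)^m$ absorbs the geometric forcing, forcing $B = \bigl(\Delta_1^i f(q_i) - d\Delta_0 f(q_i)\bigr)/(3/5-d)$; this is the step where the hypothesis $d\ne 3/5$ is essential. Adding the homogeneous solution $C\,d^m$ and fixing $C$ from $S_0 = 2f(q_i)+\Delta_0 f(q_i)$ gives $C = \Delta_0 f(q_i) - B$, and collecting terms reproduces \eqref{5eq:sumf23} exactly. (Equivalently, one may verify \eqref{5eq:sumf23} by induction on $m$, substituting the case $m-1$ into the recurrence.) I expect the only delicate points to be the bookkeeping of the shift by $i^{m-1}$, the correct treatment of the boundary case $m-1=0$, and keeping track of $q_{ii}=q_i$; once the increment is rewritten in terms of $\Delta_0 f(q_i)$ and $\Delta_1^i f(q_i)$, the rest is the routine solution of a linear recurrence.
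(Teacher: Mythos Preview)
Your proof is correct and follows essentially the same approach as the paper: derive the recurrence $S_m = d\,S_{m-1} + h_i(q_{i^{m-1}j}) + h_i(q_{i^{m-1}k})$ from \eqref{eq:FIF-def}, evaluate the harmonic increment via Lemma~\ref{lem:harm-recurrent} and the relation $h_i(q_\ell)=f(q_{i\ell})-d\,f(q_\ell)$ to rewrite it as $2(1-d)f(q_i)+(3/5)^{m-1}\bigl(\Delta_1^i f(q_i)-d\,\Delta_0 f(q_i)\bigr)$, and then solve the resulting first-order linear recurrence. The only cosmetic difference is that the paper subtracts the particular solution $2f(q_i)+\frac{\delta_i}{3/5-d}(3/5)^m$ and iterates the resulting geometric identity, whereas you phrase the same step as ``particular plus homogeneous'' and match the initial condition.
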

 \begin{proof}
 Let $h_i$, $i=1,2,3$, be harmonic functions defined by \eqref{eq:FIF-def}. Then
  \begin{equation}\label{eq:hi-def}
  h_i(q_j)=f(q_{ij})-d\cdot f(q_j), \quad i,j=1,2,3.
  \end{equation}
  By Lemma~\ref{lem:harm-recurrent}, for all $m\geq 0$,
 \begin{equation}\label{eq:hiij}
    h_i(q_{i^{m}j})+h_i(q_{i^{m}k})
    =2f(q_{i})-2df(q_i)+\left(\frac{3}{5}\right)^m\left(\Delta_1^if(q_{i})-d\Delta_0f(q_i)\right).
 \end{equation}

 Denote $\delta_i=\Delta_1^if(q_{i})-d\Delta_0f(q_i), \ i=1,2,3$. From the above equality and \eqref{eq:FIF-def},
 \begin{align*}
   f(q_{i^{m}j})+f(q_{i^{m}k}) &= d  \left( f(q_{i^{m-1}j})+f(q_{i^{m-1}k})\right)+h_i(q_{i^{m-1}j})+h_i(q_{i^{m-1}k}) \nonumber\\
    &=d \left( f(q_{i^{m-1}j})+f(q_{i^{m-1}k})\right)+2f(q_{i})-2df(q_i)+\delta_i\left(\frac{3}{5}\right)^{m-1}.
 \end{align*}
It follows that
 \begin{align*}
   f(q_{i^{m}j})&+f(q_{i^{m}k}) -2f(q_i)-\frac{\delta_i}{3/5-d}\left(\frac{3}{5}\right)^{m}  \\ &= d  \left(f(q_{i^{m-1}j})+f(q_{i^{m-1}k})-2f(q_i)-\frac{\delta_i}{3/5-d}\left(\frac{3}{5}\right)^{m-1} \right)\\
    &=d^m  \left( f(q_j)+f(q_k)-2f(q_i)-\frac{\delta_i}{3/5-d}\right).
 \end{align*}
 Thus the lemma holds.
 \end{proof}

\begin{lemma}\label{lem:delfm+1}
 Let $f$ be an FIF with uniform vertical scaling factor $d\not=\frac{3}{5}$ on SG.  Then for  any nonnegative integer $m$,
 \begin{equation}\label{eq:delm+1}
\Delta_{m+1}f(q_{12})=\frac{d(\Delta_1^2 f(q_2)+\Delta_1^1 f(q_1)+\frac{3}{5}\Delta_0 f(q_3))}{3/5-d}\cdot\left[ (\frac{3}{5})^m - d^m \right] + \Delta_1 f(q_{12}) \cdot (\frac{3}{5})^m.
\end{equation}
\end{lemma}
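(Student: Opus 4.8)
The plan is to compute $\Delta_{m+1}f(q_{12})$ directly by identifying the four neighbours of $q_{12}$ in the graph $\Gamma_{m+1}$ and then reducing everything to level-$m$ data through the self-similar identity \eqref{eq:FIF-def}. Since $q_{12}=q_{21}$ is a junction vertex, it belongs to exactly two $(m+1)$-cells: the cell $P_{12^m}$ lying inside $P_1(SG)$, for which $q_{12}=P_{12^m}(q_2)$ because $q_2$ is the fixed point of $P_2$; and the cell $P_{21^m}$ inside $P_2(SG)$, for which $q_{12}=q_{21}=P_{21^m}(q_1)$. Hence the four $\Gamma_{m+1}$-neighbours of $q_{12}$ are $q_{12^m1},q_{12^m3}$ (from the $P_1$ side) and $q_{21^m2},q_{21^m3}$ (from the $P_2$ side), and I split $\Delta_{m+1}f(q_{12})$ accordingly into a $P_1$-part $S_1$ and a $P_2$-part $S_2$.

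First I would treat $S_1=\bigl(f(q_{12^m1})-f(q_{12})\bigr)+\bigl(f(q_{12^m3})-f(q_{12})\bigr)$. Writing each value through $f(P_1(\cdot))=d\,f(\cdot)+h_1(\cdot)$ turns $S_1$ into $d\bigl(f(q_{2^m1})+f(q_{2^m3})-2f(q_2)\bigr)+\bigl(h_1(q_{2^m1})+h_1(q_{2^m3})-2h_1(q_2)\bigr)$. The first bracket is handled by Lemma~\ref{lem:f+f} with $i=2$, and the second by Lemma~\ref{lem:harm-recurrent} applied to the harmonic function $h_1$ with $i=2$, which contributes the pure $(\tfrac35)^m$ term $(\tfrac35)^m\bigl(h_1(q_1)+h_1(q_3)-2h_1(q_2)\bigr)$. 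By the symmetry $1\leftrightarrow2$, $S_2$ reduces in exactly the same way, with $i=1$ replacing $i=2$ and $h_2$ replacing $h_1$.

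Adding $S_1$ and $S_2$ and collecting separately the terms carrying $d^m$ and those carrying $(\tfrac35)^m$, I then match coefficients against the claimed formula. Two elementary identities drive the simplification: the telescoping relation $\Delta_0f(q_1)+\Delta_0f(q_2)+\Delta_0f(q_3)=0$, which lets me replace $\Delta_0f(q_1)+\Delta_0f(q_2)$ by $-\Delta_0f(q_3)$ throughout and is what brings the $\tfrac35\Delta_0f(q_3)$ into the numerator; and the conversion $h_i(q_j)=f(q_{ij})-d\,f(q_j)$ from \eqref{eq:hi-def}, together with $q_{ii}=q_i$ and $q_{21}=q_{12}$, which I use to expand the two harmonic constants coming from Lemma~\ref{lem:harm-recurrent}.

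The main obstacle is the bookkeeping in the pure $(\tfrac35)^m$-term: after the $d^m$- and $(\tfrac35)^m$-coefficients have been reconciled, closing the argument amounts to verifying the single identity $\bigl(h_1(q_1)+h_1(q_3)-2h_1(q_2)\bigr)+\bigl(h_2(q_2)+h_2(q_3)-2h_2(q_1)\bigr)-d\,\Delta_0f(q_3)=\Delta_1f(q_{12})$. Expanding the left-hand side through $h_i(q_j)=f(q_{ij})-d\,f(q_j)$, the $f(q_3)$-weighted and $d$-weighted contributions cancel against $-d\,\Delta_0f(q_3)$, leaving $f(q_1)+f(q_2)+f(q_{13})+f(q_{23})-4f(q_{12})$, which is exactly $\Delta_1f(q_{12})$ since $q_{12}$ has the four $\Gamma_1$-neighbours $q_1,q_2,q_{13},q_{23}$. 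The case $m=0$ is immediate, the formula collapsing to $\Delta_1f(q_{12})$, and the assumption $d\neq\tfrac35$ is precisely what keeps the denominators in Lemmas~\ref{lem:f+f} meaningful.
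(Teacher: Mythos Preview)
Your proposal is correct and follows essentially the same approach as the paper: both identify the four $\Gamma_{m+1}$-neighbours of $q_{12}$, push through the self-similar relation \eqref{eq:FIF-def}, and then invoke Lemma~\ref{lem:f+f} for the $f$-parts and Lemma~\ref{lem:harm-recurrent} for the harmonic parts, combining via $\Delta_0f(q_1)+\Delta_0f(q_2)+\Delta_0f(q_3)=0$ and $h_i(q_j)=f(q_{ij})-d\,f(q_j)$. The only cosmetic difference is that the paper packages the harmonic computation into formula~\eqref{eq:hiji} (already expressed in terms of $f$ and the $\Delta$-quantities), whereas you apply Lemma~\ref{lem:harm-recurrent} to $h_1,h_2$ first and translate back to $f$ afterwards.
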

\begin{proof}
 Let $h_i$, $i=1,2,3$ be harmonic functions defined by \eqref{eq:FIF-def}. Then for all $m\geq 0$,
\begin{align*}
\Delta_{m+1}f(q_{12})=&f(q_{12^{m}1})+f(q_{12^{m}3})+f(q_{21^{m}2})+f(q_{21^{m}3})-4f(q_{12})\nonumber\\
=&d\left\{f(q_{2^{m}1})+f(q_{2^{m}3})+f(q_{1^{m}2})+f(q_{1^{m}3})\right\}\nonumber\\
 &+h_1(q_{2^{m}1})+h_1(q_{2^{m}3})+h_2(q_{1^{m}2})+h_2(q_{1^{m}3})-4f(q_{12}).
\end{align*}
Noticing that $q_{12}=q_{21}$,  $\Delta_0f(q_1)+\Delta_0f(q_2)+\Delta_0f(q_3)=0$ and  $\Delta_1^1f(q_{12})+\Delta_1^2f(q_{21})=\Delta_1f(q_{12})$.
Thus, from Lemma~\ref{lem:f+f}, we have
\begin{align*}
  &d\left\{f(q_{2^{m}1})+f(q_{2^{m}3})+f(q_{1^{m}2})+f(q_{1^{m}3})-2(f(q_1)+f(q_2))\right\}\\
    &\hspace{2em}= -d^{m+1}\Delta_0 f(q_3) + \frac{d (\Delta_1^2 f(q_2)+\Delta_1^1 f(q_1)
      + d\Delta_0 f(q_3) )}{3/5-d}  \cdot \left[ (\frac{3}{5})^m-d^m\right].
\end{align*}
From \eqref{eq:hi-def} and  Lemma~\ref{lem:harm-recurrent}, if $(i,j,k)$ is a permutation of $(1,2,3)$, then
 \begin{equation}\label{eq:hiji}
         h_i(q_{j^{m}i})+h_i(q_{j^{m}k})
                                       =2f(q_{ij})-2df(q_j)+\left(\Delta_1^if(q_{ij})-d\Delta_0f(q_j)\right)\left(\frac{3}{5}\right)^m.
 \end{equation}
It follows that
\begin{align*}
  &h_1(q_{2^{m}1})+h_1(q_{2^{m}3})+h_2(q_{1^{m}2})+h_2(q_{1^{m}3})-4f(q_{12})+ 2d(f(q_2)+f(q_1))\\
  &\hspace{2em}=(\Delta_1 f(q_{12}) + d \Delta_0 f(q_3) ) \cdot (\frac{3}{5})^m.
\end{align*}
From $1+\frac{d}{3/5-d}=\frac{3/5}{3/5-d}$, we know that the corollary holds.
\end{proof}

   \begin{lemma}\label{5lem:DF}
   Let $f$ be an FIF with uniform vertical scaling factor $d$ on SG. Then, for any $\omega \in \Sigma_3^*,$ any nonnegative integer $m$ and distinct $i,j \in \{1,2,3\},$
   \begin{equation*}
     \Delta_{|\omega|+m+1}f(q_{\omega i j})=d^{|\omega|}\Delta_{m+1}f(q_{ij}).
   \end{equation*}
   \end{lemma}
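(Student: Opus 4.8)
The plan is to exploit the self-similarity of $f$ together with the locality of the graph Laplacian. The essential geometric observation is that, since $i\neq j$, the point $q_{ij}=P_i(q_j)=P_j(q_i)$ lies in $V_1\setminus V_0$, and hence $q_{\omega ij}=P_\omega(q_{ij})$ lies strictly in the interior of the cell $P_\omega(SG)$. Consequently every level-$(|\omega|+m+1)$ neighbour of $q_{\omega ij}$ lies inside $P_\omega(SG)$, and under the similitude $P_\omega$ these neighbours are exactly the images of the level-$(m+1)$ neighbours of $q_{ij}$ in $SG$. This correspondence is what allows us to relate the two Laplacians.

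First I would establish the iterated scaling relation: for every $\omega\in\Sigma_3^*$ there is a harmonic function $H_\omega$ on $SG$ with
\[
 f(P_\omega(x))=d^{|\omega|}f(x)+H_\omega(x),\qquad x\in SG.
\]
This follows by induction on $|\omega|$ from \eqref{eq:FIF-def}: writing $\omega=a\tau$ and applying $f(P_a(y))=d\,f(y)+h_a(y)$ with $y=P_\tau(x)$, the new correction term is $d\,H_\tau(x)+h_a(P_\tau(x))$. Since harmonic functions on $SG$ restrict to (blown-up) harmonic functions on every subcell, $h_a\circ P_\tau$ is harmonic, and a sum of harmonic functions is harmonic, completing the induction.

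Next, using the neighbour identification above, I would expand
\[
 \Delta_{|\omega|+m+1}f(q_{\omega ij})=\sum_{y\sim_{m+1}q_{ij}}\bigl(f(P_\omega(y))-f(P_\omega(q_{ij}))\bigr)
\]
and substitute the scaling relation. The right-hand side splits into $d^{|\omega|}\sum_{y\sim_{m+1}q_{ij}}(f(y)-f(q_{ij}))=d^{|\omega|}\Delta_{m+1}f(q_{ij})$ plus the correction $\sum_{y\sim_{m+1}q_{ij}}(H_\omega(y)-H_\omega(q_{ij}))=\Delta_{m+1}H_\omega(q_{ij})$. Since $q_{ij}\in V_1\setminus V_0\subseteq V_{m+1}\setminus V_0$ and $H_\omega$ is harmonic, the graph Laplacian of $H_\omega$ vanishes there, so the correction term is $0$ and the claimed identity follows.

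The main obstacle is the careful justification of the neighbour identification, namely verifying that $q_{\omega ij}$ has no level-$(|\omega|+m+1)$ neighbour outside $P_\omega(SG)$ and that the $P_\omega$-images of the level-$(m+1)$ neighbours of $q_{ij}$ account for all of them. This rests on the p.c.f. gluing condition \eqref{eq:pcf-cond}, which forces cells of the same level to meet only along $P_\omega(V_0)$; since $q_{ij}\notin V_0$, the relevant vertex is interior and the graph neighbourhoods transfer cleanly under $P_\omega$. Alternatively, one can package the same idea as a one-step induction on $|\omega|$ using only the single relation $f\circ P_a=d\,f+h_a$ together with the vanishing of the graph Laplacian of $h_a$ at interior vertices, which avoids introducing $H_\omega$ explicitly.
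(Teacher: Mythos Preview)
Your proposal is correct. The paper does not actually supply a proof of this lemma; it simply writes ``We can use the same proof as Lemma~6.2 in \cite{RR}. Thus we omit the details.'' Your argument---iterating $f\circ P_i=d\,f+h_i$ to obtain $f\circ P_\omega=d^{|\omega|}f+H_\omega$ with $H_\omega$ harmonic, observing that $q_{ij}\in V_1\setminus V_0$ forces all level-$(|\omega|+m+1)$ neighbours of $q_{\omega ij}$ to lie in $P_\omega(SG)$, and then killing the $H_\omega$ contribution via $\Delta_{m+1}H_\omega(q_{ij})=0$---is exactly the natural route and is what the referenced proof amounts to.
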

   \begin{proof}
      We can use the same proof as Lemma 6.2 in \cite{RR}. Thus we omit the details.
   \end{proof}

   \begin{theorem}
   Let $f$ be an FIF with uniform vertical scaling factor $d$ on SG. If $\Delta f(p)< \infty$ for all $p \in V_*\backslash V_0 $,   then $f$ satisfies one of the following conditions:
   \begin{enumerate}
     \item   $f$ satisfies the ``$\frac{1}{5}-\frac{2}{5}$ rule" at the points in $V_1\setminus V_0$;
     \item   $d=\frac{1}{5}$ and $f(q_{12})+\frac{1}{5}f(q_3)=f(q_{13})+\frac{1}{5}f(q_2)=f(q_{23})+\frac{1}{5}f(q_1).$
   \end{enumerate}
   Furthermore, in case 1, $f$ is a harmonic function so that $\Delta f=0$, and in case 2,
   \begin{equation}\label{eq:Delta-f-formula}
     \Delta f=3\big(2f(q_1)+2f(q_2)+f(q_3)-5f(q_{12})\big).
   \end{equation}
  \end{theorem}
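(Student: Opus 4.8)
The plan is to push everything down to the three junction points $q_{12},q_{13},q_{23}$ of $V_1\setminus V_0$ and to read off the constraints forced by the existence of $\lim_{m}\tfrac32 5^{m}\Delta_m f$ from the closed form in Lemma~\ref{lem:delfm+1}. First I would use Lemma~\ref{5lem:DF}, $\Delta_{|\omega|+m+1}f(q_{\omega ij})=d^{|\omega|}\Delta_{m+1}f(q_{ij})$, which after multiplying by $\tfrac32 5^{|\omega|+m+1}$ gives $\tfrac32 5^{|\omega|+m+1}\Delta_{|\omega|+m+1}f(q_{\omega ij})=(5d)^{|\omega|}\cdot\tfrac32 5^{m+1}\Delta_{m+1}f(q_{ij})$. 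Hence the finiteness of $\Delta f$ at every point of $V_*\setminus V_0$ is governed by its finiteness at the three junctions, and its value at a deeper point is $(5d)^{|\omega|}$ times the value at the corresponding junction. Assuming $d\ne\tfrac35$ and writing $A,B$ for the two coefficients in \eqref{eq:delm+1} for the pair $(1,2)$ (and $A_{ij},B_{ij}$ for the permuted versions), the sequence to control is
\[
\tfrac32 5^{m+1}\Delta_{m+1}f(q_{12})=\tfrac{15}{2}\big[(A+B)\,3^{m}-A\,(5d)^{m}\big],
\]
a combination of the two geometric sequences $3^{m}$ and $(5d)^{m}$.

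Next I would extract the algebraic constraints. Since $3^{m}\to\infty$, convergence forces the coefficient of $3^{m}$ to vanish, i.e. $A_{ij}+B_{ij}=0$ for each pair (when $|5d|>3$ one kills $(5d)^{m}$ first, but $A_{ij}+B_{ij}=0$ still follows). The heart of the computation is to rewrite this condition using the ``$\tfrac15$--$\tfrac25$ residuals'' $R_{ij}:=5f(q_{ij})-2f(q_i)-2f(q_j)-f(q_k)$, where $k$ is the remaining index, so that $R_{ij}=0$ is exactly the $\tfrac15$--$\tfrac25$ rule at $q_{ij}$. Expanding the graph Laplacians, I expect the identities $A_{ij}+B_{ij}=0\iff B_{ij}=-2dR_{ij}$ together with $B_{ij}=\tfrac15(-4R_{ij}+R_{ik}+R_{jk})$; eliminating the $B$'s turns the three conditions into the linear system $\big((10d-5)I+J\big)\mathbf R=0$, where $\mathbf R=(R_{12},R_{13},R_{23})^{T}$ and $J$ is the all-ones matrix. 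Its determinant is $(10d-2)(10d-5)^{2}$.

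Then I would run the dichotomy on $d$. If $d\notin\{\tfrac15,\tfrac12\}$ the matrix is invertible, so $\mathbf R=0$ and $f$ obeys the $\tfrac15$--$\tfrac25$ rule, which is Case~1; by the remark after Corollary~\ref{corol:energy-SG} this means $\mathcal E_1(f)=\mathcal E_0(f)$, so $f$ is harmonic and $\Delta f=0$. The value $d=\tfrac12$ is only a spurious zero of the determinant: there $1<5d<3$, so after $A+B=0$ removes $3^{m}$ the leftover $-\tfrac{15}{2}A(5d)^{m}$ still diverges unless $A=0$, forcing $A_{ij}=B_{ij}=0$, hence $\mathbf R=0$ and again Case~1. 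For $d=\tfrac15$ we have $5d=1$, so $(5d)^{m}\equiv 1$ and no constraint beyond $A+B=0$ arises; since $(10d-5)I+J=-3I+J$ has kernel the all-ones line, we get $R_{12}=R_{13}=R_{23}=:R$, i.e. $f(q_{12})+\tfrac15 f(q_3)=f(q_{13})+\tfrac15 f(q_2)=f(q_{23})+\tfrac15 f(q_1)$, which is Case~2. In this case the leftover is the constant $-\tfrac{15}{2}A_{12}$, and from $A_{12}=-B_{12}=\tfrac{2R}{5}$ I would obtain $\Delta f(q_{12})=-3R=3\big(2f(q_1)+2f(q_2)+f(q_3)-5f(q_{12})\big)$; since $5d=1$, the scaling in Lemma~\ref{5lem:DF} shows $\Delta f$ takes this same constant value at every point of $V_*\setminus V_0$, giving \eqref{eq:Delta-f-formula}.

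The main obstacle is twofold. First, the resonant value $d=\tfrac35$ is excluded from Lemmas~\ref{lem:f+f} and~\ref{lem:delfm+1}, so it needs a separate direct recursion: there the two rates coincide, $\Delta_{m+1}f(q_{12})$ acquires a term of the form $(\alpha m+\beta)(\tfrac35)^{m}$, and convergence of $\tfrac32 5^{m+1}(\alpha m+\beta)(\tfrac35)^m=\tfrac{15}{2}(\alpha m+\beta)3^{m}$ forces both coefficients to vanish, which I expect again yields $\mathbf R=0$ (Case~1). Second, and more delicate, is the bookkeeping that converts the analytic condition $A_{ij}+B_{ij}=0$ into the clean system $\big((10d-5)I+J\big)\mathbf R=0$: this is exactly where the distinguished role of $d=\tfrac15$ (as opposed to the harmonic alternative) emerges, and it must be combined with the rate comparison above to dispose of the spurious singularity at $d=\tfrac12$.
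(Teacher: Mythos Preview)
Your proposal is correct but follows a genuinely different route from the paper. The paper begins by invoking Corollary~\ref{corol:energy-SG} together with the fact that $u\in\dom\Delta$ implies $\mathcal{E}(u)<\infty$, thereby restricting at the outset to $|d|<\tfrac{1}{\sqrt{5}}$; this disposes of both $d=\tfrac12$ and $d=\tfrac35$ before any linear algebra is done. It then writes the convergence condition as the matrix equation $\big(\tfrac{d}{3/5-d}A_1+A_2\big)(y_1,y_2,y_3)^T=\big(\tfrac{d}{3/5-d}B_1+B_2\big)(x_1,x_2,x_3)^T$ in the raw coordinates $x_i=f(q_i)$, $y_k=f(q_{ij})$, computes $\det(\lambda A_1+A_2)=-2(\lambda-5)^2(2\lambda-1)$, and observes that the spurious root $\lambda=5$ (i.e.\ $d=\tfrac12$) is already out of range. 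You instead bypass the energy argument entirely and analyze the growth rates $3^m$ versus $(5d)^m$ directly for every $d\in(-1,1)$; this obliges you to handle $d=\tfrac12$ via the extra constraint $A_{ij}=0$ coming from the surviving $(5d)^m$ term, and $d=\tfrac35$ via a separate resonant recursion. Your change of variables to the residuals $R_{ij}=5f(q_{ij})-2f(q_i)-2f(q_j)-f(q_k)$ and the resulting system $\big((10d-5)I+J\big)\mathbf R=0$ is a cleaner packaging than the paper's four matrices, and makes the dichotomy ``$\tfrac15$--$\tfrac25$ rule versus $d=\tfrac15$'' completely transparent. The trade-off: the paper's use of the energy bound is shorter but leans on an external inclusion $\dom\Delta\subset\dom\mathcal{E}$ (and tacitly on upgrading the pointwise hypothesis to membership in $\dom\Delta$), whereas your argument is self-contained at the cost of two extra special cases.
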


   \begin{proof}
     From Corollary~\ref{corol:energy-SG}, $\mathcal{E}(f)<\infty$ if and only if either $\mathcal{E}_1(f)=\mathcal{E}_0(f)$ or $|d|<\frac{1}{\sqrt{5}}$. Furthermore, in case that $\mathcal{E}_1(f)=\mathcal{E}_0(f)$, $f$ is a harmonic function so that $\Delta f=0$ on $SG\backslash V_0$. Notice that $\mathcal{E}_1(f)=\mathcal{E}_0(f)$ if and only if $f$ satisfies the ``$\frac{1}{5}-\frac{2}{5}$ rule" at the points in $V_1\setminus V_0$. Thus, in the sequel of the proof, we assume  that $|d|<\frac{1}{\sqrt{5}}$.

   For any $p \in V_*\backslash V_1 $, there exists $\omega=\omega_1\cdots \omega_n \in \Sigma^*$ and distinct $i,j \in \{1,2,3\}$ such that $p=q_{\omega ij}.$ By Lemma \ref{5lem:DF}, we have
   \begin{eqnarray}
     \Delta f(p) &=& \Delta f(q_{\omega ij})= \frac{3}{2}\lim_{m\rightarrow\infty}5^{m+n+1}\Delta_{m+n+1}f(q_{\omega ij})\notag\\
      &=& \frac{3}{2}\lim_{m\rightarrow\infty}5^{m+n+1}d^n\Delta_{m+1}f(q_{ ij})=(5d)^n\Delta f(q_{ ij}). \label{eq:Delta-f-p}
   \end{eqnarray}
   Thus we only need to find the condition such that $\Delta f(q_{12}),\Delta f(q_{13}),\Delta f(q_{23})<\infty.$

From Lemma~\ref{lem:delfm+1} and using the symmetry, we can see that for distinct $i,j\in\{1,2,3\}$,
\begin{align}
\Delta f(q_{ij})&=\frac{3}{2}\lim_{m\rightarrow\infty}5^{m+1}\Delta_{m+1}f(q_{ij}) \notag\\
&=\frac{15}{2}\lim_{m\rightarrow\infty}\left\{ \frac{d(\Delta_1^i f(q_i)+\Delta_1^j f(q_j)+\frac{3}{5}\Delta_0 f(q_{6-i-j}))}{3/5-d}\cdot\left[ 3^m - (5d)^m \right] + \Delta_1 f(q_{ij}) \cdot 3^m \right\}.\label{eq:Delta-f-qij}
\end{align}

   For simplicity, we denote $f(q_i)$ by $x_i$ for $i=1,2,3$. For distinct $i,j\in \{1,2,3\}$, we denote $f(q_{ij})$ by $y_{6-i-j}$ and
   \begin{align}\label{def:alpha-ij-def}
     \alpha_{ij}=\Delta_1^i f(q_i)+\Delta_1^j f(q_{j})+\frac{3}{5}\Delta_0 f(q_{6-i-j})
     =-\frac{7}{5}x_i-\frac{7}{5}x_j-\frac{6}{5}x_{6-i-j}+y_i+y_j+2y_{6-i-j}.
   \end{align}
   Then $\Delta f(q_{ij})$ exist for all distinct $i,j$ if and only if
   \begin{equation}\label{eq:final-eq}
     \frac{d}{3/5-d}\alpha_{ij}+\Delta_1 f(q_{ij})=0
   \end{equation}
   for all distinct $i,j$,
   and one of the following condition holds: $|d|<1/5$, $d=1/5$ or $\alpha_{ij}=0$ for all distinct $i,j$.

   Notice that $\alpha_{ij}=0$ for all distinct $i,j$ implies that
   \begin{equation*}
      A_1 (y_1, y_2, y_3)^T = B_1 (x_1, x_2, x_3)^T, \quad \mbox{where}
   \end{equation*}
   \begin{equation*}
     A_1=\left(\begin{array}{ccc}  1 & 1 & 2 \\ 1 & 2 & 1 \\ 2 & 1 &1  \end{array}\right), \quad
     B_1=\left(\begin{array}{ccc}  \frac{7}{5} & \frac{7}{5} & \frac{6}{5} \\ \frac{7}{5} & \frac{6}{5} & \frac{7}{5} \\ \frac{6}{5} & \frac{7}{5} & \frac{7}{5}  \end{array}\right).
   \end{equation*}
   Meanwhile, $\Delta_1 f(q_{ij})=0$ for all distinct $i,j$ implies that
   \begin{equation*}
      A_2 (y_1, y_2, y_3)^T = B_2 (x_1, x_2, x_3)^T, \quad \mbox{where}
   \end{equation*}
   \begin{equation*}
     A_2=\left(\begin{array}{ccc}  1 & 1 & -4 \\ 1 & -4 & 1 \\ -4 & 1 & 1  \end{array}\right), \quad
     B_2=\left(\begin{array}{ccc}  -1 & -1 & 0 \\ -1 & 0 & -1 \\ 0 & -1 & -1  \end{array}\right).
   \end{equation*}

   By direct computation, it is easy to  see that
   \begin{equation*}
     A_1^{-1} B_1 = A_2^{-1} B_2 =  \left(\begin{array}{ccc}  \frac{1}{5} & \frac{2}{5} & \frac{2}{5} \\ \frac{2}{5} & \frac{1}{5} & \frac{2}{5} \\ \frac{2}{5} & \frac{2}{5} & \frac{1}{5}  \end{array}\right).
   \end{equation*}

   Notice that $\det(\lambda A_1+A_2)=-2(\lambda-5)^2(2\lambda-1)$. Thus $\det(\frac{d}{3/5-d} A_1+A_2)=0$ if and only if $d=\frac{1}{5}$ or $\frac{1}{2}$. From $|d|<\frac{1}{\sqrt{5}}$, we know that the unique reasonable solution for  $\det(\frac{d}{3/5-d} A_1+A_2)=0$ is $d=\frac{1}{5}$.
   Hence, in the case that $|d|<\frac{1}{\sqrt{5}}$ and $d\not=\frac{1}{5}$,
   $\Delta f(q_{ij})$ exist for all distinct $i,j$ if and only if $(y_1,y_2,y_3)^T=A_1^{-1}B_1 (x_1,x_2,x_3)^T$, i.e.,
   $f$ satisfies the ``$\frac{1}{5}-\frac{2}{5}$ rule" at the points in $V_1\setminus V_0$. Thus, this is a subcase of $\mathcal{E}_1(f)=\mathcal{E}_0(f)$.

   In the case that $d=\frac{1}{5}$,   we know from \eqref{eq:final-eq} that  $\Delta f(q_{ij})$ exist for all distinct $i,j$ if and only if
   $$ \left(\begin{array}{ccc} 1 & 1 & -2 \\ 1 & -2 & 1 \\ -2& 1& 1\end{array}\right)\Big((y_1,y_2,y_3)^T+\frac{1}{5}(x_1,x_2,x_3)^T\Big)=0,$$
   which is equivalent to
  \begin{equation}\label{eq:y123-equality}
   y_1+\frac{1}{5}x_1=y_2+\frac{1}{5}x_2=y_3+\frac{1}{5}x_3.
 \end{equation}
 Furthermore, in this case, we can obtain from \eqref{def:alpha-ij-def} that
 \begin{align*}
   \alpha_{ij}&=(y_i+\frac{1}{5}x_i) + (y_j+\frac{1}{5}x_j) + 2(y_{6-i-j}+\frac{1}{5}x_{6-i-j}) - \frac{8}{5}(x_i+x_j+x_{6-i-j}) \\
   &=4(y_1+\frac{1}{5}x_1) - \frac{8}{5}(x_1+x_2+x_3)=4\left( y_1 - \frac{1}{5}(2x_1+2x_2+x_3)\right).
 \end{align*}
 Combining this with \eqref{eq:Delta-f-p} and \eqref{eq:Delta-f-qij}, we have
 $\Delta f(p) = -\frac{15d}{2(3/5-d)} \alpha_{ij}=-\frac{15}{4}\alpha_{ij}$ for all $p\in SG\setminus V_0$ so that \eqref{eq:Delta-f-formula} holds. This completes the proof of the theorem.
\end{proof}

The following corollary directly follows from the above theorem and the uniqueness of the solution of the Dirichlet problem (see Theorem~2.6.1 in \cite{Str}).
\begin{corollary}
   Let $a_1,a_2,a_3$ and $\eta$ be given real numbers. Then the
 Dirichlet problem on SG
 \begin{equation*}
 \left\{
   \begin{array}{ll}
     u(q_i)=a_i, & i=1,2,3, \\
     \Delta u(x)=\eta, & x \in SG \setminus V_0
   \end{array}
 \right.
 \end{equation*}
has the unique solution $u$, which is the FIF with uniform vertical scaling factor $d=\frac{1}{5}$ and satisfying that $u(q_i)=a_i$, $i=1,2,3$, and
$u(q_{ij}) = \frac{1}{5}(2a_i+2a_j+a_{6-i-j}-\frac{\eta}{3})$ for all distinct $i$ and $j$.
 \end{corollary}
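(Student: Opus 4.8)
The plan is to treat this as an existence-and-uniqueness statement and to split the work accordingly. Uniqueness is not ours to establish: the solvability of the Poisson equation on SG with Dirichlet data is Theorem~2.6.1 in \cite{Str}, which guarantees a \emph{unique} $u\in\dom\Delta$ with $u|_{V_0}$ prescribed and $\Delta u=\eta$. Hence the entire task reduces to exhibiting one solution and recognizing it as the advertised FIF. The correct candidate is forced by the preceding theorem: in its case~2 the Laplacian of an FIF with $d=\frac15$ is the constant \eqref{eq:Delta-f-formula}, so to achieve $\Delta u=\eta$ I simply read off the required edge value by solving $\eta=3\big(2u(q_i)+2u(q_j)+u(q_{6-i-j})-5u(q_{ij})\big)$, which yields precisely $u(q_{ij})=\frac15\big(2a_i+2a_j+a_{6-i-j}-\frac{\eta}{3}\big)$.

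Concretely, I would first invoke Theorem~\ref{th:FIF-def-pcf} to define $u$ as the unique FIF on SG with uniform vertical scaling factor $d=\frac15$ whose basic function prescribes $u(q_i)=a_i$ for $i=1,2,3$ and $u(q_{ij})=\frac15\big(2a_i+2a_j+a_{6-i-j}-\frac{\eta}{3}\big)$ for distinct $i,j$. Then I must verify that this $u$ lands in case~2 of the preceding theorem. For this I would compute $u(q_{ij})+\frac15 u(q_{6-i-j})=\frac25(a_1+a_2+a_3)-\frac{\eta}{15}$ and observe that the right-hand side is symmetric in the three indices, hence independent of the chosen pair; this is exactly the requirement that $f(q_{12})+\frac15 f(q_3)=f(q_{13})+\frac15 f(q_2)=f(q_{23})+\frac15 f(q_1)$, i.e.\ condition~(2).

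With condition~(2) in hand, the preceding theorem (its case~2) gives that $\Delta u$ equals the constant in \eqref{eq:Delta-f-formula}, and substituting the chosen edge values back into that formula returns $\Delta u=\eta$ on $SG\setminus V_0$; since $u(q_i)=a_i$ by construction, $u$ solves the Dirichlet problem and, by the cited uniqueness, it is \emph{the} solution. The one point requiring care — and the place I would expect the main obstacle — is directional: the preceding theorem is phrased as ``finite Laplacian $\Rightarrow$ case~1 or~2'', whereas here I need the reverse, namely that case-2 data actually \emph{produce} a finite constant Laplacian. To close this I would appeal to the convergence computation inside that theorem's proof, where for $d=\frac15$ together with \eqref{eq:final-eq} the normalized graph Laplacians are shown to stabilize to a single constant at every point of $V_*\setminus V_0$; this simultaneously delivers finiteness, the explicit value $\eta$, and — being the same constant everywhere — membership in $\dom\Delta$.
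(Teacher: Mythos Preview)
Your proposal is correct and follows essentially the same approach as the paper, which proves the corollary in one sentence by invoking the preceding theorem together with the uniqueness from Theorem~2.6.1 in \cite{Str}. Your write-up is simply more explicit---in particular you flag and resolve the directional issue (the theorem is phrased as ``finite Laplacian $\Rightarrow$ case~1 or~2'' while the corollary needs the converse), which the paper silently absorbs into ``directly follows.''
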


\bigskip


\begin{thebibliography}{99}
\bibitem{Bar}
\newblock M.F.Barnsley,
\newblock \emph{Fractal functions and interpolation},
\newblock Constr. Approx. 2 (1986), 303-329.

\bibitem{CKO}
\newblock D.\c{C}elik, \c{C}.Ko\c{c}ak, Y.\"{O}zdemir,
\newblock \emph{Fractal interpolation on the Sierpinski Gasket},
\newblock J. Math. Anal. Appl. 337 (2008), 343-347.

\bibitem{Kig01}
\newblock J. Kigami,
\newblock \emph{Analysis on Fractals},
\newblock Cambridge University Press, 2001.

\bibitem{RR}
\newblock S.-G.Ri, H.-J.Ruan,
\newblock \emph{Some properties of fractal interpolation functions on Sierpinski gasket},
\newblock J. Math. Anal. Appl. 380 (2011), 313-322.


\bibitem{Ruan}
\newblock  H.-J.Ruan,
\newblock \emph{Fractal interpolation functions on post critically finite self-similar sets},
\newblock Fractals 18 (2010), 119-125.

\bibitem{Str}
\newblock R. S. Strichartz,
\newblock \emph{Differential Equations on Fractals},
\newblock Princeton University Press, 2006.

\end{thebibliography}
\end{document}